\documentclass{amsart}
\usepackage{graphicx}
\vfuzz2pt 
\hfuzz2pt 
\newtheorem{thm}{Theorem}[section]

\theoremstyle{definition}

\theoremstyle{example}

\theoremstyle{remark}
\newtheorem{rem}[thm]{Remark}
\numberwithin{equation}{section}

\begin{document}

\title[Binomial sums]{On some Binomial Coefficient Identities with Applications}%
\author{necdet bat{\i}r}%
\address{department of  mathematics, nev{\c{s}}ehir hbv university, nev{\c{s}}ehir, 50300 turkey}%
\email{nbatir@hotmail.com}%
\author{Sezer Sorgun}
\address{department of  mathematics, nev{\c{s}}ehir hbv university, nev{\c{s}}ehir, 50300 turkey}
\email{ srgrnzs@gmail.com}
\author{Sevda Atp{\i}nar}
\address{department of  mathematics, nev{\c{s}}ehir hbv university, nev{\c{s}}ehir, 50300 turkey}%
\email{sevdaatpinar@nevsehir.edu.tr}%

\subjclass[2000]{Primary 05A10; Secondary 05A19}%
\keywords{combinatorial identity, binomial coefficient, harmonic number, harmonic sum, Legendre polynomial}%

\begin{abstract}
We present a different proof of the following identity due to Munarini, which generalizes a curious binomial identity of Simons.
\begin{align*}
\sum_{k=0}^{n}\binom{\alpha}{n-k}\binom{\beta+k}{k}x^k
&=\sum_{k=0}^{n}(-1)^{n+k}\binom{\beta-\alpha+n}{n-k}\binom{\beta+k}{k}(x+1)^k,
\end{align*}
where $n$ is a non-negative integer and $\alpha$ and $\beta$ are complex numbers, which are not negative integers. Our approach is based on a particularly  interesting combination of the Taylor theorem and the Wilf-Zeilberger algorithm. We also generalize  a combinatorial identity due to Alzer and Kouba, and offer a new binomial sum identity.  Furthermore, as applications,  we give many harmonic number sum identities. As examples, we prove that
\begin{equation*}
H_n=\frac{1}{2}\sum_{k=1}^{n}(-1)^{n+k}\binom{n}{k}\binom{n+k}{k}H_k
\end{equation*}
and
\begin{align*}
\sum_{k=0}^{n}\binom{n}{k}^2H_kH_{n-k}=\binom{2n}{n} \left((H_{2n}-2H_n)^2+H_{n}^{(2)}-H_{2n}^{(2)}\right).
\end{align*}
\end{abstract}
\maketitle
\section{Introduction}\label{Sec1}
In 2001 Simons \cite{Simons} discovered a curious binomial identity which can be equivalently written as
\begin{equation}\label{eq1}
\sum_{k=0}^{n}\binom{n}{k}\binom{n+k}{k}x^k=\sum_{k=0}^{n}(-1)^{n+k}\binom{n}{k}\binom{n+k}{k}(x+1)^k.
\end{equation}
This formula has attracted  the attention of several researchers and many  alternative proofs of it have appeared in the literature. These authors used  a variety of techniques in their proofs. For instance, Bat{\i}r and Atp{\i}nar  \cite{Batir3} extended \eqref{eq1} using the Taylor theorem. Chapman \cite{Chapman}, using the generating function method, gave an elegant and short proof of \eqref{eq1}. Prodinger \cite{Prodinger} used  the Cauchy integral formula to demonstrate a novel and very short proof of this identity. Wang and Sun \cite{Wang} presented a very short proof of it  utilizing  a linear transformation in the space of polynomials. Shattuck \cite{Shattuck}  presented combinatorial proofs of some Simons-type binomial coefficient identities. Hirschhorn \cite{Hirschhorn} made short comments on Simons' curious identity. Also, see some recent studies given by Wei et al. \cite{Wei} and  Xu and Cen \cite{Xu}. Munarini \cite{Munarini}, using the Cauchy integral formula, offered the following nice generalization of (\ref{eq1}):
 \begin{align}\label{eq2}
\sum_{k=0}^{n}\binom{\alpha}{n-k}&\binom{\beta+k}{k}x^ky^{n-k}\notag\\
&=\sum_{k=0}^{n}(-1)^{n+k}\binom{\beta-\alpha+n}{n-k}\binom{\beta+k}{k}(x+y)^ky^{n-k}.
\end{align}
This can be rewritten as follows after replacing $x$ by $\frac{x}{y}$.
\begin{align}\label{eq3}
\sum_{k=0}^{n}\binom{\alpha}{n-k}\binom{\beta+k}{k}x^k=\sum_{k=0}^{n}(-1)^{n+k}\binom{\beta-\alpha+n}{n-k}\binom{\beta+k}{k}(x+1)^k.
\end{align}
Note that  (\ref{eq2}) reduces to (\ref{eq1}) when we put $\alpha=\beta=n$ and $y=1$. Our first aim in the paper is to provide a different proof of \eqref{eq2} using the Taylor theorem and the Wilf-Zeilberger algorithm; see \cite{Petkovsek}.

In a recent paper \cite[Eq. (4.1)]{Alzer}), Alzer and Kouba proved the following identity
\begin{equation}\label{Al}
4^n\binom{\lambda}{n}=\binom{2\lambda}{n}\sum_{k=0}^{n}\binom{n}{k}\frac{\binom{n-\lambda-1/2}{k}}{\binom{k-\lambda-1/2}{k}},
\end{equation}
which is valid  for all integers $n\geq 0$ and  any complex number $\lambda$, and they used it to derive several harmonic number sum identities.  The second aim of this work is to generalize identity  \eqref{Al} and to apply it to deduce further binomial sum identities involving harmonic numbers. Our final aim is to prove a  new binomial sum identity, which is stated in  Theorem \ref{thm2}. The proofs of Theorems \ref{thm2} and \ref{thm3} are relied on  the WZ theory. Applications of our results lead to many interesting and new combinatorial identities involving harmonic numbers. We also give a new representation for the Legendre polynomials $P_n(x)$. Many examples of these kinds obtained by taking particular values of the parameters involved are collected  in Section \ref{Sec3}. To ensure accuracy, all formulas appearing in this paper were verified numerically by \textit{Mathematica}. We believe that the method we used in the proofs of the theorems can be applied to prove some other combinatorial equalities.

Now we need to recall some basic tools that we use  to prove our examples. The classical gamma  function $ \Gamma(x)=\int_{0}^{\infty }u^{x-1}e^{-u}du$ and the digamma function $\psi $, logarithmic derivative of the $\Gamma$-function satisfy the following relations for any integer $n\geq 0$
\begin{equation}\label{eq4}
\Gamma\left(n+\frac{1}{2}\right)=\frac{(2n)!\Gamma(1/2)}{2^{2n}n!},
\end{equation}
and for any complex number $x$, which is not an integer
\begin{equation}\label{eq5}
\Gamma(x)\Gamma(1-x)=\frac{\pi}{\sin(\pi x)},
\end{equation}
\begin{equation}\label{eq6}
\psi (n+1/2)-\psi(1/2)=2H_{2n}-H_n,
\end{equation}
where  $\gamma=0.57721...$ is the Euler-Mascheroni constant and $H_n=1+\frac{1}{2}+\frac{1}{3}+\cdots+\frac{1}{n}$ is $n$ \textit{th} harmonic number, and
\begin{equation}\label{eq7}
\psi (1-x)-\psi (x)=\pi \cot (\pi x)\quad \mbox{and}\quad \psi ^{\prime
}\left( x\right) -\psi ^{\prime }\left( 1-x\right) =\pi ^{2}\csc ^{2}(\pi x);
\end{equation}
see, for example, \cite{Srivastava}. A generalized binomial coefficient $\binom{s}{t}$ is defined, in terms of the classical gamma function, by
\begin{equation}\label{eq8}
\binom{s}{t}=\frac{\Gamma (s+1)}{\Gamma (t+1)\Gamma (s-t+1)},\quad s,t\in
\mathbb{C}.
\end{equation}

\section{Main Results}\label{Sec2}
In the first theorem in this section we give a different proof of \eqref{eq3} using the Taylor theorem and the WZ theory.
\begin{thm}\label{thm1}For all $\alpha,\beta\in \mathbb{C}$, which are not negative integers,  and any non-negative integer $n$ we have
\begin{align}\label{eq9}
\sum_{k=0}^{n}\binom{\alpha}{n-k}\binom{\beta+k}{k}x^k=\sum_{j=0}^{n}(-1)^{n+j}\binom{\beta-\alpha+n}{n-j}\binom{\beta+j}{j}(x+1)^j.
\end{align}
\end{thm}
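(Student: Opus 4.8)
The plan is to use that both sides of \eqref{eq9} are polynomials in $x$ of degree at most $n$, and to prove their equality by matching Taylor coefficients about the point $x=-1$. Write $L(x)$ and $R(x)$ for the left- and right-hand sides of \eqref{eq9}. The right-hand side is already displayed as an expansion in powers of $(x+1)$, so by Taylor's theorem its normalized $j$-th derivative at $x=-1$ is read off directly:
\[
\frac{R^{(j)}(-1)}{j!}=(-1)^{n+j}\binom{\beta-\alpha+n}{n-j}\binom{\beta+j}{j}.
\]
Since a polynomial of degree at most $n$ is determined by its value and first $n$ derivatives at a single point, it suffices to show that the Taylor coefficients of $L$ about $x=-1$ equal these numbers for every $j=0,1,\dots,n$.

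Next I would differentiate $L$ term by term. Using $\frac{d^{j}}{dx^{j}}x^{k}\big|_{x=-1}=\frac{k!}{(k-j)!}(-1)^{k-j}$ for $k\ge j$, this gives
\[
\frac{L^{(j)}(-1)}{j!}=\sum_{k=j}^{n}(-1)^{k-j}\binom{\alpha}{n-k}\binom{\beta+k}{k}\binom{k}{j}.
\]
Applying the absorption identity $\binom{\beta+k}{k}\binom{k}{j}=\binom{\beta+j}{j}\binom{\beta+k}{k-j}$ and substituting $m=k-j$ factors out $\binom{\beta+j}{j}$ (which is nonzero because $\beta$ is not a negative integer), so that matching the two coefficients reduces to the single-variable identity
\[
\sum_{m=0}^{N}(-1)^{m}\binom{\alpha}{N-m}\binom{\gamma+m}{m}=(-1)^{N}\binom{\gamma-\alpha+N}{N},
\]
where $N=n-j$ and $\gamma=\beta+j$; here I used $(-1)^{n+j}=(-1)^{n-j}$ and $\beta-\alpha+n=\gamma-\alpha+N$.

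The final and main step is to establish this reduced identity, and this is where the Wilf--Zeilberger machinery enters. Working throughout with the Gamma-function definition \eqref{eq8}, so that every coefficient is meaningful for complex $\alpha,\beta$ away from the excluded negative integers, I would normalize by the conjectured right-hand side and set $\hat F(N,m)=(-1)^{m-N}\binom{\alpha}{N-m}\binom{\gamma+m}{m}\big/\binom{\gamma-\alpha+N}{N}$, turning the claim into $\sum_{m}\hat F(N,m)=1$ for all $N$. The WZ algorithm then produces a rational certificate $R(N,m)$ for which the companion $G(N,m)=R(N,m)\hat F(N,m)$ satisfies $\hat F(N+1,m)-\hat F(N,m)=G(N,m+1)-G(N,m)$. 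Summing over $m$ collapses the right-hand side by telescoping, the boundary terms vanishing because $\binom{\alpha}{N-m}$ and $\binom{\gamma+m}{m}$ confine the support to $0\le m\le N$; hence $\sum_m\hat F(N,m)$ is independent of $N$, and since it equals $1$ at $N=0$ it equals $1$ for all $N$.

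I expect the Taylor-coefficient reduction to be routine bookkeeping; the delicate points are ensuring that the binomial manipulations remain valid for non-integer parameters (which \eqref{eq8} guarantees) and verifying the WZ certificate, which is the genuine engine of the argument. As a sanity check on the reduced identity, upper negation $\binom{\gamma+m}{m}=(-1)^{m}\binom{-\gamma-1}{m}$ converts the alternating sum into $\sum_{m}\binom{\alpha}{N-m}\binom{-\gamma-1}{m}=\binom{\alpha-\gamma-1}{N}$ by the Chu--Vandermonde convolution, and one further upper negation recovers $(-1)^{N}\binom{\gamma-\alpha+N}{N}$; this confirms the target value and guarantees that the WZ verification will succeed while still providing a self-contained, algorithmic certificate.
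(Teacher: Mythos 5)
Your proposal follows essentially the same route as the paper's proof: expand the left-hand side in its Taylor polynomial about $x=-1$, reduce the theorem to the coefficient identity $\sum_{k}(-1)^{k+j}\binom{\beta+k}{k}\binom{k}{j}\binom{\alpha}{n-k}=(-1)^{n+j}\binom{\beta+j}{j}\binom{\beta-\alpha+n}{n-j}$ for $0\le j\le n$, and certify that identity by a WZ telescoping argument; all of your steps check out. Your additional absorption step $\binom{\beta+k}{k}\binom{k}{j}=\binom{\beta+j}{j}\binom{\beta+k}{k-j}$ followed by upper negation in fact collapses the coefficient identity to a single Chu--Vandermonde convolution, so the WZ certificate you defer to is actually dispensable --- a slightly cleaner finish than the paper's explicit certificate, though the overall architecture is identical.
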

\begin{proof}
We define
\begin{align}\label{eq10}
f(x)=\sum_{k=0}^{n}\binom{\alpha}{n-k}\binom{\beta+k}{k}x^k.
\end{align}
Since $f$ is a polynomial of degree $n$ in $x$ its Taylor polynomial at $x=-1$ is
\begin{equation}\label{eq11}
f(x)=\sum_{j=0}^{n}\frac{f^{(j)}(-1)}{j!}(x+1)^j.
\end{equation}
Differentiating $f$ $j$ times it follows that
\begin{align*}
\frac{f^{(j)}(x)}{j!}=\sum_{k=0}^{n}\binom{\alpha}{n-k}\binom{\beta+k}{k}\binom{k}{j}x^{k-j}.
\end{align*}
We set $x=-1$ and get
\begin{align*}
\frac{f^{(j)}(-1)}{j!}=\sum_{k=0}^{n}\binom{\alpha}{n-k}\binom{\beta+k}{k}\binom{k}{j}(-1)^{k-j}.
\end{align*}
Applying this to \eqref{eq11} yields
\begin{equation*}
f(x)=\sum_{j=0}^{n}\left(\sum_{k=0}^{n}\binom{\alpha}{n-k}\binom{\beta+k}{k}\binom{k}{j}(-1)^{k-j}\right)(x+1)^j.
\end{equation*}
Comparing the coefficients of $(x+1)^j$  in this identity and the right-hand side of \eqref{eq9} we conclude that \eqref{eq9} holds if and only if  for all $\alpha,\beta\in \mathbb{C}$, which are not negative integers, and all  $j=0,1,2,..., n$ we have
\begin{equation}\label{eq12}
\sum_{k=0}^{n}(-1)^{k+j} \binom{\beta+k}{k} \binom{k}{j} \binom{\alpha}{n-k}=(-1)^{n+j}\binom{\beta+j}{j}\binom{\beta-\alpha+n}{n-j}.
\end{equation}
In order to show that \eqref{eq12} is valid we define
\begin{equation}\label{eq13}
F(n,k)=\frac{(-1)^{k+n} \binom{\beta+k}{k} \binom{k}{j} \binom{\alpha}{n-k}}{\binom{\beta+j}{j} \binom{-\alpha+\beta+n}{n-j}}.
\end{equation}
Applying the Wilf-Zeilberger algorithm \cite{Petkovsek}, we find the companion function
\begin{equation}\label{eq14}
G(n,k)=\frac{(-1)^{n+k}(j-k) (\alpha+k-n) \binom{\beta+k}{k} \binom{k}{j} \binom{\alpha}{n-k}}{(k-n-1) (\alpha-\beta-n-1) \binom{\beta+j}{j} \binom{-\alpha+\beta+n}{n-j}}
\end{equation}
and  the following difference equation holds
\begin{equation*}
F(n,k)-F(n+1,k)=G(n,k+1)-G(n,k).
\end{equation*}
We sum both  sides of this equation from  $k=0$ to $k=n+1$  and get
\begin{equation*}
\sum_{k=0}^{n+1}\big(F(n,k)-F(n+1,k)\big)=\sum_{k=0}^{n+1}\big(G(n,k+1)-G(n,k)\big).
\end{equation*}
The right hand side of this equality is a telescoping sum, thus
\begin{equation*}
\sum_{k=0}^{n+1}F(n,k)-\sum_{k=0}^{n+1}F(n+1,k)=G(n,n+2)-G(n,0).
\end{equation*}
It is obvious  from (15) that $G(n,0)=G(n,n+2)=0$. Note that $\binom{\alpha}{-2}=0$. This leads to
\begin{equation*}
\sum_{k=0}^{n+1}F(n,k)-\sum_{k=0}^{n+1}F(n+1,k)=0.
\end{equation*}
or equivalently
\begin{equation*}
\sum_{k=0}^{n+1}F(n+1,k)-\sum_{k=0}^{n}F(n,k)-F(n,n+1)=0.
\end{equation*}
But since  $F(n,n+1)=0$, which follows from \eqref{eq13}, this gives
\begin{equation*}
\sum_{k=0}^{n+1}F(n+1,k)-\sum_{k=0}^{n}F(n,k)=0.
\end{equation*}
Replacing $n$ by $\upsilon$  here and then summing the resultant identity from $\upsilon=0$ to $\upsilon=n-1$  we conclude
\begin{equation*}
\sum_{\upsilon=0}^{n-1}\bigg(\sum_{k=0}^{\upsilon+1}F(\upsilon+1,k)-\sum_{k=0}^{\upsilon}F(\upsilon,k)\bigg)=0.
\end{equation*}
This is also a telescoping sum, thus, we obtain
\begin{equation*}
\sum_{k=0}^{n}F(n,k)-F(0,0)=0.
\end{equation*}
Since $0\leq j\leq n$  we have from \eqref{eq13} that $F(0,0)=1$, so that
\begin{equation*}
\sum_{k=0}^{n}F(n,k)=1,
\end{equation*}
which is equivalent to  \eqref{eq12}. This completes the proof of Theorem \ref{thm1}.
\end{proof}
In the next theorem, we  generalize the identity given by \eqref{Al}.
\begin{thm}\label{thm2} Let $s$ and $t$ be complex numbers, which are not negative integers and $n$ be a non-negative integer. Then we have
\begin{equation}\label{eq15}
\sum_{k=0}^{n}\binom{n}{k}\frac{\binom{s}{k}}{\binom{t+k}{k}}=\frac{\binom{n+s+t}{s}}{\binom{s+t}{s}}.
\end{equation}
\end{thm}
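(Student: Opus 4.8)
The plan is to recognize the left-hand side of \eqref{eq15} as a terminating Gaussian hypergeometric series and to evaluate it by the Chu--Vandermonde summation theorem. Writing $t_k=\binom{n}{k}\binom{s}{k}/\binom{t+k}{k}$ for the summand, I would first compute the term ratio. Using $\binom{n}{k+1}/\binom{n}{k}=(n-k)/(k+1)$, $\binom{s}{k+1}/\binom{s}{k}=(s-k)/(k+1)$, and $\binom{t+k}{k}/\binom{t+k+1}{k+1}=(k+1)/(t+k+1)$, these combine to
\[
\frac{t_{k+1}}{t_k}=\frac{(n-k)(s-k)}{(k+1)(t+k+1)}=\frac{(k-n)(k-s)}{(k+1)(k+t+1)}.
\]
Since $t_0=1$, this ratio identifies the sum precisely as ${}_2F_1(-n,-s;\,t+1;\,1)$, a series that terminates automatically because the parameter $-n$ is a non-positive integer.

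The second step is to apply the Chu--Vandermonde identity ${}_2F_1(-n,b;c;1)=(c-b)_n/(c)_n$ with $b=-s$ and $c=t+1$, which gives $(s+t+1)_n/(t+1)_n$. The final step is a pure gamma-function rewriting: expanding the Pochhammer symbols as $(s+t+1)_n=\Gamma(s+t+1+n)/\Gamma(s+t+1)$ and $(t+1)_n=\Gamma(t+1+n)/\Gamma(t+1)$, and expanding the right-hand side of \eqref{eq15} through the definition \eqref{eq8}, both sides collapse to $\Gamma(n+s+t+1)\Gamma(t+1)/[\Gamma(s+t+1)\Gamma(n+t+1)]$. The hypotheses that $s$ and $t$ are not negative integers guarantee that every gamma factor and binomial involved is well defined.

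I expect no serious obstacle on this route; the only care required is the bookkeeping in the term-ratio computation and the observation that the termination condition holds for every integer $n\geq 0$. As an alternative consistent with the WZ methodology used in the proof of Theorem~\ref{thm1}, I could instead set $F(n,k)$ equal to $t_k\,\binom{s+t}{s}/\binom{n+s+t}{s}$, produce a rational companion $G(n,k)$ by creative telescoping satisfying $F(n+1,k)-F(n,k)=G(n,k+1)-G(n,k)$, sum over $k$, and use the boundary vanishing together with $F(0,0)=1$ to conclude $\sum_{k}F(n,k)=1$. On that route the main difficulty would shift to verifying that the certificate $G$ satisfies the recurrence identically in $n,k,s,t$, an essentially mechanical but considerably lengthier computation than the hypergeometric argument above.
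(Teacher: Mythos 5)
Your proof is correct, but it follows a genuinely different route from the paper, which proves Theorem \ref{thm2} by the Wilf--Zeilberger method: the authors normalize the summand to $A(n,k)=\binom{n}{k}\binom{s}{k}\binom{s+t}{t}\big/\big(\binom{t+k}{k}\binom{n+s+t}{s}\big)$, exhibit a certificate $B(n,k)$ with $A(n+1,k)-A(n,k)=B(n,k+1)-B(n,k)$, and telescope in both $k$ and $n$ down to $A(0,0)=1$ --- exactly the alternative you sketch in your closing remarks. Your primary argument instead identifies the sum as the terminating series ${}_2F_1(-n,-s;t+1;1)$ via the term ratio $t_{k+1}/t_k=(k-n)(k-s)/\big((k+1)(k+t+1)\big)$ and invokes Chu--Vandermonde to get $(s+t+1)_n/(t+1)_n$, which the gamma-function bookkeeping correctly converts to $\binom{n+s+t}{s}/\binom{s+t}{s}$; the hypothesis that $t$ is not a negative integer keeps $(t+1)_n$ nonzero, so the application of Chu--Vandermonde is legitimate. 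What your route buys is brevity and transparency: the identity is exposed as a disguised instance of a classical summation theorem, with no certificate to verify. What the paper's route buys is self-containedness and uniformity --- the same telescoping template is reused verbatim for Theorems \ref{thm1}, \ref{thm2} and \ref{thm3}, and it does not presuppose the Chu--Vandermonde theorem (though that theorem is itself typically proved by comparable elementary means). Either argument is a complete proof.
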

\begin{proof} Our proof is based on the Wilf-Zeilberger algorithm. Let
\begin{equation}\label{eq16}
A(n,k)=\frac{\binom{n}{k}\binom{s}{k}\binom{s+t}{t}}{\binom{t+k}{k}\binom{n+s+t}{s}}.
\end{equation}
In order to complete the proof it suffices to show that
$$
\sum_{k=0}^{n}A(n,k)=1.
$$
The Wilf--Zeilberger algorithm gives us
\begin{equation}\label{eq17}
B(n,k)=\frac{k(t+k)\binom{n}{k}\binom{s}{k}\binom{t+s}{t}}{(k-n-1)(n+t+s+1)\binom{t+k}{k}\binom{n+s+t}{s}}
\end{equation}
and the pair $(A,B)$ is a  WZ-pair. That is,
\begin{equation*}
A(n+1,k)-A(n,k)=B(n,k+1)-B(n,k).
\end{equation*}
We sum both  sides of this equation from  $k=0$ to $k=n+1$.  This gives us
\begin{equation*}
\sum_{k=0}^{n+1}\big(A(n+1,k)-A(n,k)\big)=\sum_{k=0}^{n+1}\big(B(n,k+1)-B(n,k)\big).
\end{equation*}
The right hand side of this equality is a telescoping sum, thus
\begin{equation*}
\sum_{k=0}^{n+1}A(n+1,k)-\sum_{k=0}^{n+1}A(n,k)=B(n,n+2)-B(n,0).
\end{equation*}
It is obvious  from \eqref{eq17} that $B(n,0)=B(n,n+2)=0$. This leads to
\begin{equation*}
\sum_{k=0}^{n+1}A(n+1,k)-\sum_{k=0}^{n+1}A(n,k)=0.
\end{equation*}
or equivalently
\begin{equation*}
\sum_{k=0}^{n+1}A(n+1,k)-\sum_{k=0}^{n}A(n,k)-A(n,n+1)=0.
\end{equation*}
But since  $A(n,n+1)=0$, which follows from \eqref{eq16}, it follows that
\begin{equation*}
\sum_{k=0}^{n+1}A(n+1,k)-\sum_{k=0}^{n}A(n,k)=0.
\end{equation*}
Replacing $n$ by $p$  here and then summing the resultant identity from $p=0$ to $p=n-1$  we arrive at
\begin{equation*}
\sum_{p=0}^{n-1}\bigg(\sum_{k=0}^{p+1}A(p+1,k)-\sum_{k=0}^{p}A(p,k)\bigg)=0.
\end{equation*}
This is also a telescoping sum, thus, we obtain
\begin{equation*}
\sum_{k=0}^{n}A(n,k)-A(0,0)=0.
\end{equation*}
By \eqref{eq16} we have $A(0,0)=1$, so that
\begin{equation*}
\sum_{k=0}^{n}A(n,k)=1,
\end{equation*}
completing the proof of Theorem \ref{thm2}.
\end{proof}
Note that \eqref{eq15}  reduces to \eqref{Al} when  we substitute $s=n-\lambda-\frac{1}{2}$ and $t=-\lambda-\frac{1}{2}$.
\begin{thm}\label{thm3} Let $s$ and $p$ be complex numbers, which are not negative integers, and $n$ be a non-negative integer. Then we have
\begin{equation}\label{eq18}
\sum_{k=0}^{n}(-1)^{n+k}\binom{n}{k}\binom{s+k}{k}\binom{k}{p}=\binom{n}{p}\binom{s+p}{n}.
\end{equation}
\end{thm}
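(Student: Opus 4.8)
The plan is to prove \eqref{eq18} by the Wilf--Zeilberger mechanism already employed for Theorems~\ref{thm1} and~\ref{thm2}, treating $s$ and $p$ as formal parameters. Following the pattern of \eqref{eq13} and \eqref{eq16}, I would normalise the summand by the conjectured value of the sum and set
\[
F(n,k)=\frac{(-1)^{n+k}\binom{n}{k}\binom{s+k}{k}\binom{k}{p}}{\binom{n}{p}\binom{s+p}{n}},
\]
so that \eqref{eq18} becomes the single assertion $\sum_{k=0}^{n}F(n,k)=1$.

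First I would run the WZ algorithm on $F$ to obtain a rational certificate $R(n,k)$ together with the companion $G(n,k)=R(n,k)F(n,k)$ for which the WZ relation $F(n+1,k)-F(n,k)=G(n,k+1)-G(n,k)$ holds; checking this relation is then a routine verification of a rational-function identity. Summing over $k$ from $0$ to $n+1$ telescopes the right-hand side to $G(n,n+2)-G(n,0)$, and one checks from the explicit expressions that $G(n,0)=G(n,n+2)=0$, while $F(n,n+1)=0$ because $\binom{n}{n+1}=0$. This leaves $\sum_{k=0}^{n+1}F(n+1,k)=\sum_{k=0}^{n}F(n,k)$, so the row sum $S(n):=\sum_{k=0}^{n}F(n,k)$ is independent of $n$, and evaluating at $n=0$ gives $S(0)=F(0,0)=1$.

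The step I expect to be the real obstacle is not the algebra but the degeneracies hidden in the normalisation. The base term $F(0,0)=\binom{0}{p}\big/\binom{0}{p}$ equals $1$ precisely when $\binom{0}{p}\neq 0$, and by the reflection formula \eqref{eq5} this fails only when $p$ is a positive integer; likewise the factors $\binom{n}{p}$ and $\binom{s+p}{n}$ in the denominator must be nonzero for the reduction to be legitimate. I would therefore first establish the identity for $s,p$ generic and then recover the remaining cases (in particular $p$ a non-negative integer, which is exactly what the harmonic-number applications need) by continuity: both sides of \eqref{eq18} are entire in $p$ and polynomial in $s$, so an identity valid on a set with an accumulation point persists wherever both sides are defined.

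As an independent check I would note a shorter, WZ-free derivation when $p$ is a non-negative integer. Trinomial revision $\binom{n}{k}\binom{k}{p}=\binom{n}{p}\binom{n-p}{k-p}$ extracts the factor $\binom{n}{p}$; writing $\binom{s+k}{k}=\binom{s+p+(k-p)}{s}$ and summing over the integer range then reduces the claim to the classical alternating identity $\sum_{j=0}^{m}(-1)^{j}\binom{m}{j}\binom{c+j}{d}=(-1)^{m}\binom{c}{d-m}$, valid for complex $c,d$ by Pascal's rule and induction on $m$, with $c=s+p$, $d=s$ and $m=n-p$, which returns exactly $\binom{n}{p}\binom{s+p}{n}$.
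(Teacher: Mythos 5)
Your proposal follows essentially the same route as the paper: the authors also normalise the summand by the right-hand side of \eqref{eq18}, obtain the WZ companion function explicitly (their \eqref{eq20}), telescope over $k$ using $Q(n,0)=Q(n,n+2)=0$ and $P(n,n+1)=0$ to show the row sum is constant in $n$, and evaluate at $n=0$. Your added attention to the degenerate parameter values and the alternative trinomial-revision derivation go beyond what the paper records, but the core argument is the same.
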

\begin{proof} We again employ the WZ theory. Let
\begin{equation}\label{eq19}
P(n,k)=\frac{(-1)^{n+k}\binom{n}{k}\binom{s+k}{k}\binom{k}{p}}{\binom{n}{p}\binom{s+p}{n}}.
\end{equation}
 Using the Wilf-Zeilberger algorithm, this gives us the companion function
\begin{equation}\label{eq20}
Q(n,k)=\frac{(-1)^{n+k}k(k-p)\binom{n}{k}\binom{k}{p}\binom{-1-s}{k}}{(-1+k-n)(n-p-s)\binom{n}{p}\binom{s+p}{n}},
\end{equation}
and we obtain that the pair $(P,Q)$ satisfies the following difference equation
\begin{equation*}
P(n,k)-P(n+1,k)=Q(n,k+1)-Q(n,k).
\end{equation*}
We sum both  sides of this equation from  $k=0$ to $k=n+1$  and get
\begin{equation*}
\sum_{k=0}^{n+1}\big(P(n,k)-P(n+1,k)\big)=\sum_{k=0}^{n+1}\big(Q(n,k+1)-Q(n,k)\big).
\end{equation*}
The right hand side of this equality is a telescoping sum, so that
\begin{equation*}
\sum_{k=0}^{n+1}P(n,k)-\sum_{k=0}^{n+1}P(n+1,k)=Q(n,n+2)-Q(n,0).
\end{equation*}
It is clear  from \eqref{eq20} that $Q(n,0)=Q(n,n+2)=0$. This leads to
\begin{equation*}
\sum_{k=0}^{n+1}P(n,k)-\sum_{k=0}^{n+1}P(n+1,k)=0
\end{equation*}
or equivalently
\begin{equation*}
\sum_{k=0}^{n+1}P(n+1,k)-\sum_{k=0}^{n}P(n,k)-P(n,n+1)=0.
\end{equation*}
But since  $P(n,n+1)=0$, which follows from \eqref{eq19}, this gives
\begin{equation*}
\sum_{k=0}^{n+1}P(n+1,k)-\sum_{k=0}^{n}P(n,k)=0.
\end{equation*}
Replacing $n$ by $p$  here and then summing the resultant identity from $p=0$ to $p=n-1$  we obtain
\begin{equation*}
\sum_{p=0}^{n-1}\bigg(\sum_{k=0}^{p+1}P(p+1,k)-\sum_{k=0}^{p}P(p,k)\bigg)=0.
\end{equation*}
This is also a telescoping sum, thus, we have
\begin{equation*}
\sum_{k=0}^{n}P(n,k)-P(0,0)=0.
\end{equation*}
By \eqref{eq19} we have $P(0,0)=1$, so that
\begin{equation*}
\sum_{k=0}^{n}P(n,k)=1,
\end{equation*}
which is equivalent to \eqref{eq18}.
\end{proof}

\section{Applications}\label{Sec3}
In this section we provide many applications of Theorems \ref{thm1}, \ref{thm2} and \ref{thm3} by setting particular values for  the paremeters $\alpha,\,\beta$, $t$, $s$ and $p$. It seems to us  that all examples offered here are new in the literature. If we set $\alpha=n$ in \eqref{eq9} and use
$$
\binom{\beta}{n-k}\binom{\beta+k}{k}=\binom{n}{k}\binom{\beta+k}{n},
$$
which can be easily verified, we get
\begin{equation}\label{eq21}
\sum_{k=0}^{n}\binom{n}{k}\binom{\beta+k}{k}x^k=\sum_{k=0}^{n}(-1)^{n+k}\binom{n}{k}\binom{\beta+k}{n}(1+x)^k,
\end{equation}
where $\beta$ and $x$ are complex numbers with $\beta\neq -1,-2,-3,...$  and $n\geq 0$ be an integer.\\
$\bullet$ We set  $x=0$ and $x=-1$ in \eqref{eq21} and get, respectively
\begin{equation}\label{eq22}
\sum_{k=0}^{n}(-1)^{k}\binom{n}{k}\binom{\beta+k}{n}=(-1)^n
\end{equation}
and
\begin{equation*}
\sum_{k=0}^{n}(-1)^k\binom{n}{k}\binom{\beta+k}{k}=(-1)^n\binom{\beta}{n}.
\end{equation*}
$\bullet$ Differentiate both sides of (\ref{eq21}) with respect to $\beta$ and then  put $\beta=n$ to obtain
\begin{equation*}
\sum_{k=0}^{n}(-1)^k\binom{n}{k}\binom{n+k}{k}H_{n+k}-H_n\sum_{k=0}^{n}(-1)^k\binom{n}{k}\binom{n+k}{k}=(-1)^nH_n.
\end{equation*}
Using \eqref{eq22} with $\beta=n$, this gives the following explicit representation for $H_n$.
\begin{equation*}
H_n=\frac{1}{2}\sum_{k=0}^{n}(-1)^{n+k}\binom{n}{k}\binom{n+k}{k}H_{n+k}.
\end{equation*}
$\bullet$ Substituting  $\beta=-\frac{1}{2}$ in \eqref{eq21} produces
\begin{equation}\label{eq23}
\sum_{k=0}^{n}\binom{n}{k}\binom{-\frac{1}{2}+k}{k}x^k=\sum_{k=0}^{n}(-1)^{n+k}\binom{n}{k}\binom{-\frac{1}{2}+k}{n}(1+x)^k.
\end{equation}
We have
\begin{equation*}
\binom{-1/2+k}{k}=\frac{\Gamma(k+1/2)}{k!\Gamma(1/2)}=\frac{1}{4^{k}}\binom{2k}{k}
\end{equation*}
and
\begin{equation*}
 \binom{-1/2+k}{n}=\frac{\Gamma(k+1/2)}{n!\Gamma(1/2+k-n)}=\frac{(-1)^{n+k}\binom{2k}{k}\binom{2n-2k}{n-k}}{4^n\binom{n}{k}},
\end{equation*}
both of which follow from the Legendre's duplication and reflection formulas for the classical gamma function $\Gamma$ given in \eqref{eq4}, \eqref{eq5} and \eqref{eq6}. Replacing these in (\ref{eq23}) it follows that
\begin{equation}\label{eq24}
\sum_{k=0}^{n}\binom{n}{k}\binom{2k}{k}\frac{x^k}{2^{2k}}=\frac{1}{2^{2n}}\sum_{k=0}^{n}\binom{2k}{k}\binom{2n-2k}{n-k}(1+x)^k.
\end{equation}
$\bullet$ Let $P_n(x)$ be the classical Legendre's polynomials. From \cite[p. 39]{Gould} (see also \cite{Alzer}) we have
\begin{equation*}
t^nP_n\left(\frac{t^2+1}{2t}\right)=\frac{1}{4^n}\sum_{k=0}^{n}\binom{2k}{k}\binom{2n-2k}{n-k}t^{2k}.
\end{equation*}
Making the substitution $x\to t^2-1$ in \eqref{eq24}, we have
\begin{equation*}
\sum_{k=0}^{n}\binom{n}{k}\binom{2k}{k}\left(\frac{t^2-1}{4}\right)^k=\frac{1}{4^{n}}\sum_{k=0}^{n}\binom{2k}{k}\binom{2n-2k}{n-k}t^{2k}.
\end{equation*}
From the last two identities we deduce the following new representation for the Legendre's polynomials for $t\neq 0$:
\begin{equation*}
P_n\left(\frac{t^2+1}{2t}\right)=\frac{1}{t^n}\sum_{k=0}^{n}\binom{n}{k}\binom{2k}{k}\left(\frac{t^2-1}{4}\right)^k.
\end{equation*}
By the well-known binomial inversion formula this leads to
\begin{equation*}
\sum_{k=0}^{n}(-1)^k\binom{n}{k}P_k\left(\frac{t^2+1}{2t}\right)t^k =\binom{2n}{n}\left(\frac{1-t^2}{4}\right)^n.
\end{equation*}
$\bullet$ Differentiating both sides of \eqref{eq18} with respect to $p$ and then setting $p=0$, we get
\begin{equation}\label{eq25}
\sum_{k=0}^{n}(-1)^{n+k}\binom{n}{k}\binom{s+k}{k}H_k=\binom{s}{n}\big(H_n+\psi(s+1)-\psi(s-n+1)\big).
\end{equation}
For $s=n$ this leads to the following elegant representation for $H_n$:
\begin{equation*}
H_n=\frac{1}{2}\sum_{k=1}^{n}(-1)^{n+k}\binom{n}{k}\binom{n+k}{k}H_k,
\end{equation*}
where we have used $\psi(n+1)+\gamma=H_n$. Setting $s=\frac{1}{2}$ in \eqref{eq25} we get
\begin{equation*}
\sum_{k=1}^{n}(-1)^{k}\binom{n}{k}\binom{2k}{k}\frac{H_k}{4^k}=\frac{1}{2^{2n-1}}\binom{2n}{n}\big(H_n-H_{2n}\big).
\end{equation*}
$\bullet$ Differentiating twice both sides of \eqref{eq18} with respect to $p$ and then setting $p=0$ we find that
\begin{align*}
&\sum_{k=0}^{n}(-1)^{n+k}\binom{n}{k}\binom{s+k}{k}\big(H_k^2+H_n^{(2)}\big)\\
&=\binom{s}{n} \bigg(2H_n\big(\psi(s+1)-\psi(-n+s+1)\big)+\psi(-n+s+1)^2\\
&-2 \psi(s+1)\psi(-n+s+1)-\psi ^{\prime}(-n+s+1)+H_n^2\\
&+H_n^{(2)}+\psi(s+1)^2+\psi ^{\prime}(s+1)-\psi^\prime(s-n+1)\bigg)
\end{align*}
where  $\psi$ is the digamma function. For $s=n$ this gives
\begin{align*}
&H_n^2=\frac{1}{4}\sum_{k=0}^{n}(-1)^{n+k}\binom{n}{k}\binom{n+k}{k}\big(H_k^2+H_k^{(2)}\big),
\end{align*}
where $H_k^{(2)}=1+\frac{1}{4}+\frac{1}{9}+\cdots+\frac{1}{k^2}$.\\
$\bullet$ Applying the binomial inversion formula to \eqref{eq18} we get
\begin{equation*}
\sum_{k=0}^{n}\binom{n}{k}\binom{s+p}{k}\binom{k}{p}=\binom{n}{p}\binom{s+n}{n}.
\end{equation*}
Setting $s=n-\frac{1}{2}$ and $p=\frac{1}{2}$ here we obtain
\begin{equation}\label{eq26}
\sum_{k=0}^{n}\frac{4^k\binom{n}{k}^2}{\binom{2k}{k}}=\frac{\binom{4n}{2n}}{\binom{2n}{n}}.
\end{equation}
Using
$$
\binom{2n}{n}\binom{n}{k}^2=\binom{2k}{k}\binom{2n-2k}{n-k}\binom{2n}{2k}
$$
\eqref{eq26} can be rewritten in the following elegant form
\begin{equation*}
\sum_{k=0}^{n}\binom{2n}{2k}\binom{2n-2k}{n-k}4^k=\binom{4n}{2n}.
\end{equation*}
$\bullet$ Setting $p=n+\frac{1}{2}$ in \eqref{eq15} we get
\begin{equation}\label{eq27}
\sum_{k=0}^{n}\binom{s+k}{k}\binom{2n-2k}{n-k}4^k=\frac{\binom{2n}{n}\binom{2n+2s+1}{2s+1}}{\binom{n+s}{n}}.
\end{equation}
If we differentiate both sides of \eqref{eq27} with respect to $s$ and then set $s=0$ we conclude that
\begin{equation*}
\sum_{k=0}^{n}\binom{2k}{k}\frac{H_{n-k}}{4^k}=\frac{2n+1}{4^n}\binom{2n}{n}\big(2H_{2n+1}-H_n-2\big).
\end{equation*}
$\bullet$ From \eqref{eq15} with $s=n$ and $t=1$ we obtain
\begin{equation*}
\sum_{k=1}^{n}k\binom{n}{k}^2=\frac{n}{2}\binom{2n}{n}.
\end{equation*}
Applying the operator $\frac{\partial^2}{\partial s\partial t}$ both sides of \eqref{eq15}, using
$$
\sum_{k=0}^{n}\binom{n}{k}^2H_k=\binom{2n}{n}\left(2H_{n}-H_{2n}\right),
$$
and finally putting $t=0$ and $s=n$ in the resultant we get
\begin{align*}
\sum_{k=0}^{n}\binom{n}{k}^2H_kH_{n-k}&=\binom{2n}{n} \left((H_{2n}-2H_n)^2+H_{n}^{(2)}-H_{2n}^{(2)}\right).
\end{align*}
We differentiate both sides of \eqref{eq18} with respect to $t$ twice and then we  set $s=n$ and $t=0$. This gives
\begin{align*}
\sum_{k=0}^{n}\binom{n}{k}^2\left((H_k)^2+H_k^{(2)}\right)=\binom{2n}{n}\left((H_{2n}-2H_n)^2+2H_n^{(2)}-H_{2n}^{(2)}\right).
\end{align*}

\begin{rem}It is possible to derive many other examples using our main results, but we are satisfied with these to keep our paper short. Clearly, the examples presented  here demonstrate the usefulness of our main results. Several other applications of \eqref{eq21} can be found in \cite{Batir3}.
\end{rem}
\bibliographystyle{amsplain}

\begin{thebibliography}{6}
\bibitem{Alzer} H. Alzer and O. Kouba, On a combinatorial identity of Gavrea and Ivan and related results, \textit{Aequat. Math.} 2022,
	https://doi.org/10.1007/s00010-022-00923-0.
\bibitem{Batir1} N. Bat{\i}r and A. Sofo, Finite sums involving reciprocals of the binomial and central binomial coefficients and harmonic
	numbers, \textit{Symmetry} 13 (11), 2021.
\bibitem{Batir2} N. Bat{\i}r, H. K\"{u\c{c}\"{u}}k, and S. Sorgun, Convolution identities involving the central binomial coefficients and Catalan numbers, \textit{Trans. Comb.} 10 (2021), 225--238.
\bibitem{Batir3} N. Bat{\i}r and S. Atp{\i}nar, An extension of a curious binomial identity of Simons,  \textit{Math. Gaz.}, to appear.
\bibitem{Chapman} R. Chapman, A curious identity revisited, \textit{Math. Gaz.} 87 (2003), 139--141.
\bibitem{Gould} H. W. Gould, Combinatorial identities: A standardized set of tables listing 500 binomial
	coefficient summations by Henry Wadsworth Gould. Morgantown, (1972).
\bibitem{Hirschhorn} M. Hirschhorn, Comments on a curious identity, \textit{Math. Gaz.} 87 (2003), 528--530.
\bibitem{Munarini} E. Munarini, Generalization of a binomial identity of Simons, \textit{Integers: Electron. J. Combin. Number Theory} 5 (2005), A15.
\bibitem{Petkovsek}  M. Petkovsek, H. S. Wilf, and  D. Zeilberger, \textit{A = B}, A K Peters Ltd., 1996.
\bibitem{Prodinger} H. Prodinger, A curious identity proved by Cauchy integral formula, \textit{Math. Gaz.} 89 (2005), 266--267.
\bibitem{Shattuck} M. Shattuck, Combinatorial proofs of some Simons-type binomial coefficient identities, \textit{Integers: Electron. J. Combin. Number Theory} 7 (2007), A27.
\bibitem{Simons}S. Simons, A curious identity, \textit{Math. Gaz.} 85 (2001), 296--298.
\bibitem{Srivastava} H. M. Srivastava and J. Choi, \textit{Zeta and $q$-Zeta Functions and Associated  Series and Integrals}, Elsevier, 2012.
\bibitem{Xu} A. Xu and Y. Cen, Combinatorial identities from contour integrals, \textit{Ramanujan J.} 40 (2016), 103--114.
\bibitem{Wang} X. Wang and Y. Sun, A new proof of a curious identity, \textit{Math. Gaz.} 91 (2007), 105--106.
\bibitem{Wei} C. Wei, Y. Wei, and C. Li, Some summation and transformation formulas from inversion techniques, \textit{Integers} 22 (2022), A95.
\end{thebibliography}
{}
\end{document}